\newtheorem{theorem}{Theorem}[section]
\newtheorem{proposition}[theorem]{Proposition}
\theoremstyle{definition}
\newtheorem{definition}[theorem]{Definition}
\theoremstyle{remark}
\numberwithin{equation}{section}
\begin{document}

\title{ On the Completeness of Gradient Ricci Solitons}

\author{Zhu-Hong Zhang}
\address{Department of Mathematics, Sun Yat-Sen University, Guangzhou, P.R.China 510275}
\email{juhoncheung@sina.com}
\thanks{}



\date{July 10, 2008.}


\keywords{completeness, gradient Ricci soliton, gradient
self-similar solution}

\begin{abstract}
A gradient Ricci soliton is a triple $(M,g,f)$ satisfying
$R_{ij}+\nabla_i\nabla_j f=\lambda g_{ij}$ for some real number
$\lambda$. In this paper, we will show that the completeness of the
metric $g$ implies that of the vector field $\nabla f$.
\end{abstract}

\maketitle



\section{Introduction}

\begin{definition}
Let $(M,g,X)$ be a smooth Riemannian manifold with $X$ a smooth
vector field. We call $M$ a Ricci soliton if
$Ric+\frac{1}{2}{\mathcal{L}}_X g={\lambda}g$ for some real number
$\lambda$. It is called shrinking when $\lambda>0$, steady when
$\lambda=0$, and expanding when $\lambda<0$. If $(M,g,f)$ is a
smooth Riemannian manifold where $f$ is a smooth function, such that
$(M,g,\nabla f)$ is a Ricci soliton, i.e. $R_{ij}+\nabla_i\nabla_j
f=\lambda g_{ij}$, we call $(M,g,f)$ a gradient Ricci soliton and
$f$ the soliton function.
\end{definition}

On the other hand, there has the following definition (see chapter 2
of \cite{{CK}}).

\begin{definition}
Let $(M,g(t),X)$ be a smooth Riemannian manifold with a solution
$g(t)$ of the Ricci flow on a time interval $(a,b)$ containing 0,
where $X$ is smooth vector field. We call $(M,g(t),X)$ self-similar
solution if there exist scalars $\sigma(t)$ such that
$g(t)=\sigma(t)\varphi_t^*(g_0)$, where the diffeomorphisms
$\varphi_t$ is generated by $X$. If the vector field $X$ comes from
a gradient of a smooth function $f$, then we call $(M,g(t),f)$ a
gradient self-similar solution.
\end{definition}

It is easy to see that if $(M,g(t),f)$ is a complete gradient
self-similar solution, then $(M,g(0),f)$ must be a complete gradient
Ricci soliton. Conversely, when $(M,g,f)$ is a complete gradient
Ricci soliton and in addition, the vector field $\nabla f$ is
complete, it is well known (see for example Theorem 4.1 of
\cite{CLN}) that there is a complete gradient self-similar solution
$(M,g(t),f)$, $t\in (a,b)$ (with $0\in (a,b)$), such that $g(0)=g$.
Here we say that a vector field $\nabla f$ is complete if it
generates a family of diffeomorphisms $\varphi_t$ of $M$ for $t\in
(a,b)$.

So when the vector field is complete, the definitions of gradient
Ricci soliton and gradient self-similar solution are equivalent. In
literature, people sometimes confuse the gradient Ricci solitons
with the gradient self-similar solutions. Indeed, if the gradient
Ricci soliton has bounded curvature, then it is not hard to see that
the vector field $\nabla f$ is complete. But, in general the soliton
does not have bounded curvature.

The purpose of this paper is to show that the completeness of the
metric $g$ of a gradient Ricci soliton $(M,g,f)$ implies that of the
vector field $\nabla f$, even though the soliton does not have
bounded curvature. Our main result is the following

\begin{theorem}
Let $(M,g,f)$ be a gradient Ricci soliton. Suppose the metric $g$ is
complete, then we have:

(i) $\nabla f$ is complete;

(ii) $R\ge0$, if the soliton is steady or shrinking;

(iii) $\exists C\ge0$, such that $R\ge-C$, if the soliton is
expanding.
\end{theorem}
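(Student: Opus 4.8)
First, I would record the algebraic and differential consequences of the soliton equation $R_{ij}+\nabla_i\nabla_j f=\lambda g_{ij}$. Tracing gives $R+\Delta f=n\lambda$; taking a divergence and using the contracted second Bianchi identity $\nabla^iR_{ij}=\tfrac12\nabla_jR$ together with $\nabla_j\Delta f=-\nabla_jR$ gives $\nabla_jR=2R_{jk}\nabla^kf$; and feeding this back into the soliton equation shows $R+|\nabla f|^2-2\lambda f$ is a constant (on the connected manifold $M$), say $C_0$, i.e.
\begin{equation}\label{eq:plan-const}
|\nabla f|^2 = C_0 + 2\lambda f - R .
\end{equation}
Differentiating $\nabla_jR=2R_{jk}\nabla^kf$ once more and commuting derivatives yields the elliptic identity $\Delta R-\langle\nabla f,\nabla R\rangle=2\lambda R-2|\mathrm{Ric}|^2$; writing $\Delta_f:=\Delta-\nabla_{\nabla f}$ and using $|\mathrm{Ric}|^2\ge\tfrac1nR^2$ this becomes
\begin{equation}\label{eq:plan-ellip}
\Delta_f R \le 2\lambda R - \tfrac2n R^2 .
\end{equation}
Logically the curvature bound below comes first, since it is what drives the completeness argument, so I would establish (ii) and (iii) before (i).

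For (ii) and (iii): if $R$ attains its infimum at a point $q$, then $\nabla R(q)=0$ and $\Delta R(q)\ge0$, so \eqref{eq:plan-ellip} forces $0\le R(q)\bigl(2\lambda-\tfrac2n R(q)\bigr)$, which gives $R(q)\ge0$ when $\lambda\ge0$ and $R(q)\ge n\lambda$ when $\lambda<0$; this settles the compact case. When $M$ is noncompact one must show the infimum of $R$ cannot be lost at infinity, and this is where completeness of $g$ is really used. The soliton equation says the Bakry--\'Emery tensor $\mathrm{Ric}+\mathrm{Hess}\,f$ equals $\lambda g$, hence is bounded below; using this together with completeness of $g$ I would run an Omori--Yau--type maximum principle for the drift Laplacian $\Delta_f$ (alternatively a localized argument, cutting off with a function of the distance and absorbing the error terms) applied to $u=-R$, producing points $x_k$ with $(-R)(x_k)\to\sup(-R)$, $|\nabla R(x_k)|\to0$ and $\limsup_k\Delta_f(-R)(x_k)\le0$; plugging these into \eqref{eq:plan-ellip} and again using $|\mathrm{Ric}|^2\ge\tfrac1nR^2$ (so that $|\mathrm{Ric}(x_k)|\to0$ would force $R(x_k)\to0$) yields $\sup(-R)\le0$ if $\lambda\ge0$ and $\sup(-R)\le-n\lambda$ if $\lambda<0$, i.e. (ii), and (iii) with $C=-n\lambda$. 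Making this maximum principle work with no a priori control on $f$ or on the full curvature tensor is the heart of the matter, and the step I expect to be the main obstacle.

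For (i), I would use the lower bound $R\ge-C$ just obtained ($C=0$ in the steady or shrinking case). Let $\gamma\colon[0,T)\to M$ be a maximal integral curve of $\nabla f$ with $T<\infty$ (the case of $-\nabla f$ being symmetric). Completeness of $g$ and maximality force $\gamma$ to have infinite length on $[0,T)$: a finite-length curve in a complete manifold has a limit point, so $\gamma(s)$ would converge as $s\to T$ and the curve could be extended, contradicting maximality. On the other hand $\frac{d}{ds}f(\gamma(s))=|\nabla f|^2\ge0$, so $f\circ\gamma$ is nondecreasing, and by \eqref{eq:plan-const} and $R\ge-C$,
\begin{equation}\label{eq:plan-ode}
\frac{d}{ds}f(\gamma(s)) = |\nabla f|^2 \le (C_0+C) + 2\lambda f(\gamma(s)) .
\end{equation}
If $\lambda\ge0$, Gronwall's inequality applied to \eqref{eq:plan-ode} shows $f\circ\gamma$ stays bounded on the finite interval $[0,T)$, so $\int_0^T|\nabla f|^2\,ds=f(\gamma(T^-))-f(\gamma(0))<\infty$ and, by Cauchy--Schwarz, $\int_0^T|\nabla f|\,ds\le T^{1/2}\bigl(\int_0^T|\nabla f|^2\,ds\bigr)^{1/2}<\infty$, contradicting infinite length. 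If $\lambda<0$, then \eqref{eq:plan-const} with $R\ge-C$ and $|\nabla f|^2\ge0$ forces $f$ to be bounded above on all of $M$, so $f\circ\gamma$ is bounded and the same contradiction follows; and for an integral curve of $-\nabla f$ along which $f\to-\infty$, one instead notes that the maximal existence time equals $\int_{-\infty}^{f(\gamma(0))}|\nabla f|^{-2}\,df$, which diverges because $|\nabla f|^2\le(C_0+C)+2\lambda f\to+\infty$ as $f\to-\infty$, so in fact $T=\infty$. In every case $T=\infty$, hence $\nabla f$ is complete, which is (i).
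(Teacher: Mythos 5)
Your argument for part (i) is correct and, granting the curvature lower bound, complete: instead of the paper's route (which integrates $|h'|\le\sqrt{h}$ for $h=f\circ\gamma$ along minimal geodesics to get the pointwise linear growth $|\nabla f|(x)\le|\lambda|d(x)+a$ and then invokes completeness of linearly growing fields), you integrate the same inequality along the flow lines themselves and rule out finite escape time via the finite-length/finite-energy dichotomy. Both arguments rest on exactly the same two inputs --- the conservation law $R+|\nabla f|^2-2\lambda f=C_0$ and the bound $R\ge-C$ --- and your version is a legitimate, slightly more direct alternative (the paper's version yields the extra information that $|\nabla f|$ grows at most linearly, which is of independent interest).

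The genuine gap is in (ii) and (iii): you never actually prove the scalar curvature lower bound, and you yourself flag it as ``the main obstacle.'' This is the technical heart of the paper, so deferring it leaves the proof incomplete. Moreover, the Omori--Yau route you propose as the primary option has a circularity problem: the drift-Laplacian Omori--Yau principle applied to $u=-R$ requires $\sup(-R)<\infty$, i.e.\ $R$ bounded below, which is precisely the conclusion you are after; with no a priori bound on $R$ or on $|\nabla f|$ it does not apply off the shelf. The alternative you mention in passing --- a localized argument cutting off with a function of the distance --- is the one that actually works and is what the paper does. The key point that makes it go through, and that your sketch omits, is the drift Laplacian comparison: integrating the soliton identity $\mathrm{Ric}(\dot\gamma,\dot\gamma)+\nabla_{\dot\gamma}\nabla_{\dot\gamma}f=\lambda$ along a minimal geodesic from a fixed $p$ gives
$$\triangle d-\langle\nabla f,\nabla d\rangle\le-\lambda d(x)+(n-1)\Big\{\tfrac{2}{3}Kr_0+r_0^{-1}\Big\}+|\nabla f|(p),$$
which requires only a local Ricci upper bound near $p$ and the value $|\nabla f|(p)$ --- no global control on $f$ or the curvature. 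Feeding this into $\triangle_f(\varphi(d/(Ar_0))R)$ at an interior negative minimum, absorbing the cutoff error terms against the good term $-\tfrac{2}{n}R^2$, and letting $A\to\infty$ yields $R\ge0$ (resp.\ $R\ge n\lambda$). Until you carry out this computation (or a genuinely noncircular maximum principle), parts (ii), (iii), and hence also (i), are not established.
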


Indeed, we will show that the vector field $\nabla f$ grows at most
linearly and so it is integrable. Hence the above Definition 1.1 and
1.2 are equivalent when the metric is complete.

\vskip 0.3cm \noindent {\bf Acknowledgement}  I would like to thank
my advisor Professor X.P.Zhu for many helpful suggestions and
discussions.

\section{Gradient Ricci Solitons}

Let $(M,g,f)$ be a gradient Ricci soliton, i.e.,
$R_{ij}+\nabla_i\nabla_j f=\lambda g_{ij}$. By using the contracted
second Bianchi identity we get the equation
$R+|{\nabla}f|^2-2{\lambda}f=const$.

\begin{definition}
Let $(M,g,f)$ be a gradient shrinking or expanding soliton. By
rescaling $g$ and changing $f$ by a constant we can assume
$\lambda\in \{-\frac{1}{2},\frac{1}{2}\}$ and
$R+|{\nabla}f|^2-2{\lambda}f=0$. We call such a soliton normalized,
and $f$ a normalized soliton function.
\end{definition}

\begin{proposition}
Let $(M,g,f)$ be a gradient Ricci soliton. Fix $p$ on $M$, and
define $d(x)\stackrel{\mathrm{\Delta}}{=}d(p,x)$, then the following
hold

(i) $\triangle R=<\nabla f,\nabla R>+2\lambda R-|Ric|^2$;

(ii) Suppose $Ric\le (n-1)K$ on $B_{r_0}(p)$, for some positive
numbers $r_0$ and $K$. Then for arbitrary point $x$, outside
$B_{r_0}(p)$, we have
$$\triangle d-<\nabla f,\nabla d>\le -\lambda
d(x)+(n-1)\Big\{\frac{2}{3}Kr_0+r_0^{-1}\Big\}+|\nabla f|(p).$$
\end{proposition}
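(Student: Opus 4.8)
The plan is to establish part (i) by a direct computation and part (ii) by combining the soliton equation with the classical Laplacian comparison theorem.

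For part (i), I would start from the soliton equation $R_{ij}+\nabla_i\nabla_j f=\lambda g_{ij}$ and take traces and divergences. Tracing gives $R+\Delta f = n\lambda$. The contracted second Bianchi identity $\nabla^i R_{ij}=\tfrac12\nabla_j R$ together with the commutation formula for third derivatives of $f$ (i.e. the Ricci identity applied to $\nabla f$) yields $\nabla_j R = 2R_{jk}\nabla^k f$, which is the standard soliton identity. Differentiating this once more and tracing, while using the contracted Bianchi identity again and the Bochner-type manipulations, should produce $\Delta R = \langle\nabla f,\nabla R\rangle + 2\lambda R - 2|Ric|^2$; one then reconciles the factor of $2$ with the stated $|Ric|^2$ by keeping track of the normalization in the $|Ric|^2$ notation. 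This is routine tensor calculus, so I would present it compactly.

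For part (ii), the key observation is that the Hessian of $f$ controls the "drift" term. Along a minimizing geodesic $\gamma$ from $p$ to $x$, the function $f\circ\gamma$ satisfies $(f\circ\gamma)'' = \nabla^2 f(\gamma',\gamma') = \lambda - Ric(\gamma',\gamma')$ by the soliton equation. Meanwhile the second variation of arc length gives the standard comparison $\Delta d \le$ (integral of the index form), and the Ricci curvature term appearing there is exactly $\int Ric(\gamma',\gamma')$. So I would split the geodesic into the portion inside $B_{r_0}(p)$, where $Ric\le(n-1)K$ gives the usual $(n-1)(\tfrac23 Kr_0 + r_0^{-1})$-type bound (this is the Calabi trick estimate for the contribution of a bounded-curvature ball), and the portion outside, where I substitute $Ric(\gamma',\gamma') = \lambda - (f\circ\gamma)''$ into the index form. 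The integral of $(f\circ\gamma)''$ telescopes to $\langle\nabla f,\gamma'\rangle\big|_p^x$, which reproduces the $\langle\nabla f,\nabla d\rangle$ term on the left and leaves the boundary value $|\nabla f|(p)$; the integral of the constant $\lambda$ over the long piece gives the $-\lambda d(x)$ term. Assembling these pieces gives the claimed inequality.

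The main obstacle I anticipate is handling the boundary terms and the $C^0$ versus $C^2$ regularity of the distance function carefully: $d$ is only Lipschitz in general and the inequality should be understood in the barrier (or support function) sense, so I would phrase the argument in terms of a smooth upper barrier for $d$ at $x$ built from the geodesic, exactly as in the proof of the Laplacian comparison theorem, and make sure the cut-locus issues do not interfere. A secondary technical point is choosing the test function on the inner ball so that the curvature-bounded portion contributes the clean constant $(n-1)(\tfrac23 Kr_0 + r_0^{-1})$ rather than a worse one — this is the standard trick of comparing with a radially symmetric model and optimizing, and I would just quote it. Once those regularity and bookkeeping matters are dispatched, the rest is substitution of the soliton equation into the second-variation formula.
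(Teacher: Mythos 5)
Your overall strategy coincides with the paper's: part (i) is the soliton identity $\nabla_j R = 2R_{jk}\nabla^k f$ differentiated and traced once more, and part (ii) is the Calabi-type index-form estimate combined with $Ric(\dot\gamma,\dot\gamma)+(f\circ\gamma)''=\lambda$ along a minimizing geodesic. On the factor of $2$ in (i), your suspicion is correct and there is nothing to ``reconcile'': the computation genuinely yields $-2|Ric|^2$, and indeed the paper's own displayed calculation ends with $-2|Ric|^2$ even though the proposition statement records $-|Ric|^2$; the statement simply drops the factor, so you should keep the $2$.

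The one step in your sketch of (ii) that does not close as written is the splitting. If you substitute $Ric(\dot\gamma,\dot\gamma)=\lambda-(f\circ\gamma)''$ only on the portion of $\gamma$ outside $B_{r_0}(p)$, then the telescoped Hessian integral produces the boundary value $\langle\nabla f,\dot\gamma\rangle$ at $\gamma(r_0)$, not at $p$ --- and $|\nabla f|(\gamma(r_0))$ is not controlled by any hypothesis --- while the constant $\lambda$ integrates to $-\lambda(d(x)-r_0)$, which is weaker than $-\lambda d(x)$ in the shrinking case. The paper avoids this by keeping the Ricci integral over the \emph{whole} geodesic: with the test fields $Y_i=(t/r_0)e_i$ on $[0,r_0]$ and $Y_i=e_i$ beyond, the index form gives $\triangle d \le -\int_0^{d(x)}Ric(\dot\gamma,\dot\gamma)\,dt+\int_0^{r_0}\bigl[\frac{n-1}{r_0^2}+(1-\frac{t^2}{r_0^2})Ric(\dot\gamma,\dot\gamma)\bigr]\,dt$, so the inner ball contributes only the excess term, bounded by $(n-1)(r_0^{-1}+\frac{2}{3}Kr_0)$ using $Ric\le(n-1)K$ there, while the full-length Ricci integral is converted by the soliton equation into $-\lambda d(x)+\langle\nabla f,\nabla d\rangle(x)-\langle\nabla f,\dot\gamma(0)\rangle$, whence the $|\nabla f|(p)$ term. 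With that correction your argument is the paper's argument; the barrier-sense treatment of the cut locus is handled identically in both.
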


\begin{proof}
(i) By using the soliton equation and the contracted second Bianchi
identity $\nabla_i R=2g^{jk}\nabla_j R_{ik} $, we have
$$\arraycolsep=1.5pt\begin{array}{rcl}
\triangle R &=& g^{ij}\nabla_i\nabla_j R = g^{ij}\nabla_i(2g^{kl}R_{jk}\nabla_l f)=2g^{ij}g^{kl}\nabla_i(R_{jk}\nabla_l f)\\[4mm]
    &=& 2g^{ij}g^{kl}\nabla_i(R_{jk})\nabla_l f+2g^{ij}g^{kl}R_{jk}\nabla_i\nabla_l f\\[4mm]
    &=& g^{kl}\nabla_k R\nabla_l f+2g^{ij}g^{kl}R_{jk}(\lambda g_{il}-R_{il})\\[4mm]
    &=& <\nabla f,\nabla R>+2\lambda R-2|Ric|^2.\\[4mm]
\end{array}$$

(ii) Let $\gamma:[0,d(x)]\rightarrow M$ be a shortest normal
geodesic from $p$ to $x$. We may assume that $x$ and $p$ are not
conjugate to each other, otherwise we can understand the
differential inequality in the barrier sense. Let
$\{\dot{\gamma}(0),e_1,\cdots,e_{n-1}\}$ be an orthonormal basis of
$T_pM$. Extend this basis parallel along $\gamma$ to form a parallel
orthonormal basis $\{\dot{\gamma}(t),e_1(t),\cdots,e_{n-1}(t)\}$
along $\gamma$.

Let $X_i(t)$, $i=1,2,\cdots,n-1$, be the Jacobian fields along
$\gamma$ with $X_i(0)=0$ and $X_i(d(x))=e_i(d(x))$. Then it is
well-known that (see for example \cite{SY})
$$\triangle d(x)=\sum\limits_{i=1}^{n-1}\int_0^{d(x)}[|\dot{X}_i|^2-R(\dot{\gamma},X_i,\dot{\gamma},X_i)]dt.$$

Define vector fields $Y_i$, $i=1,2,\cdots,n-1$, along $\gamma$ as
follows
$$
Y_i(t)= \left\{
       \begin{array}{lll}
       \frac{t}{r_0}e_i(t),\quad  &if\ t\in [0,r_0];\\[4mm]
       e_i(t),\quad  &if\ t\in [r_0,d(x)].
       \end{array}
    \right.
$$
Then by using the standard index comparison theorem we have
$$\arraycolsep=1.5pt\begin{array}{rcl}
\triangle d(x) &=& \sum\limits_{i=1}^{n-1}\int_0^{d(x)}[|\dot{X}_i|^2-R(\dot{\gamma},X_i,\dot{\gamma},X_i)]dt\\[4mm]
               &\le& \sum\limits_{i=1}^{n-1}\int_0^{d(x)}[|\dot{Y}_i|^2-R(\dot{\gamma},Y_i,\dot{\gamma},Y_i)]dt\\[4mm]
               &=& \int_0^{r_0}[\frac{n-1}{r_0^2}-\frac{t^2}{r_0^2}Ric(\dot{\gamma},\dot{\gamma})]dt
               +\int_{r_0}^{d(x)}[-Ric(\dot{\gamma},\dot{\gamma}]dt\\[4mm]
               &=& -\int_{0}^{d(x)}Ric(\dot{\gamma},\dot{\gamma})dt
               +\int_0^{r_0}[\frac{n-1}{r_0^2}+(1-\frac{t^2}{r_0^2})Ric(\dot{\gamma},\dot{\gamma})]dt\\[4mm]
               &\le& -\int_{\gamma}Ric(\dot{\gamma},\dot{\gamma})dt+(n-1)\Big\{\frac{2}{3}Kr_0+r_0^{-1}\Big\}.\\[4mm]
\end{array}$$

On the other hand,
$$<\nabla f,\nabla d>(x)=\nabla_{\dot{\gamma}}f(x)=\int_0^{d(x)}(\frac{d}{dt}\nabla_{\dot{\gamma}}f)dt+\nabla_{\dot{\gamma}}f(p)\ge\int_{\gamma}(\nabla_{\dot{\gamma}}\nabla_{\dot{\gamma}}f)dt-|\nabla f|(p).$$

Using the soliton equation, we have
$$\arraycolsep=1.5pt\begin{array}{rcl}
\triangle d-<\nabla f,\nabla d>
    &\le& -\int_{\gamma}\Big[Ric(\dot{\gamma},\dot{\gamma})+\nabla_{\dot{\gamma}}\nabla_{\dot{\gamma}}f\Big]dt+(n-1)\Big\{\frac{2}{3}Kr_0+r_0^{-1}\Big\}+|\nabla f|(p)\\[4mm]
    &=& -\lambda d(x)+(n-1)\Big\{\frac{2}{3}Kr_0+r_0^{-1}\Big\}+|\nabla f|(p).\\[4mm]
    \end{array}$$
\end{proof}

Now we are ready to prove the theorem 1.3 .

\begin{proof}
Fix a point $p$ on $M$, and define
$d(x)\stackrel{\mathrm{\Delta}}{=}d(p,x)$. We divide the argument
into three steps.

\vskip 0.1cm \noindent{\bf Step 1} We want to prove a curvature
estimate in the following assertion.

\vskip 0.1cm \noindent{\bf Claim} \emph{For any gradient Ricci
soliton, we have:}

\emph{(i) If the soliton is shrinking or steady, then $R\ge 0$;}

\emph{(ii) If the soliton is expanding, then there exist a
nonnegative constant $C=C(n)$ such that $R\ge \lambda C$.}

We only prove the case (i), $\lambda\ge0$. Note that there is a
positive constant $r_0$, such that $Ric\le (n-1)r_0^{-2}$ on
$B_{r_0}(p)$, and $|\nabla f|(p)\le (n-1)r_0^{-1}$, then by
Proposition 2.2, we have
$$\triangle d-<\nabla f,\nabla d>\le\frac{8}{3}(n-1)r_0^{-1},$$ for any $x\notin B_{r_0}(p)$.

For any fixed constant $A>2$, we consider the function
$u(x)=\varphi(\frac{d(x)}{Ar_0})R(x)$, where $\varphi$ is a fixed
smooth nonnegative decreasing function such that $\varphi=1$ on
$(-\infty,\frac{1}{2}]$, and $\varphi=0$ on $[1,\infty)$.

Then by Proposition 2.2, we have
$$\arraycolsep=1.5pt\begin{array}{rcl}
\triangle u &=& R\triangle\varphi+\varphi\triangle R+2<\nabla \varphi,\nabla R> \\[4mm]
    &=& R(\varphi''\frac{1}{(Ar_0)^2}+\varphi '\frac{1}{Ar_0}\triangle d)+\varphi(<\nabla f,\nabla R>+2\lambda R-|Ric|^2)+2<\nabla \varphi,\nabla R>. \\[4mm]
\end{array}$$

If $\min\limits_{x\in M}u \ge0$, then $R\ge0$ on
$B_{\frac{1}{2}Ar_0}(p)$. Otherwise, $\min\limits_{x\in M}u <0$,
then there exist some point $x_1\in B_{Ar_0}(p)$, such that
$u(x_1)=\varphi R(x_1)=\min\limits_{x\in M}u<0$. Because $u(x_1)$ is
the minimum of the function $u(x)$, we have $\varphi' R(x_1)>0$,
$\nabla u(x_1)=0,$ and $\triangle u(x_1)\ge0$.

Let us first consider the case that $x_1\notin B_{r_0}(p)$. Then by
direct computation, we have
$$\arraycolsep=1.5pt\begin{array}{rcl}
\triangle u (x_1)
   &=& (\frac{\varphi''}{\varphi}\frac{1}{(Ar_0)^2}+\frac{\varphi '}{\varphi}\frac{1}{Ar_0}\triangle d)u(x_1)-\frac{\varphi'}{\varphi}\frac{1}{Ar_0}<\nabla f,\nabla d>u(x_1) \\[4mm]
   &&\hskip 0.1cm  +2\lambda u(x_1)-\varphi|Ric|^2-\frac{\varphi '^2}{\varphi^2}\frac{2}{(Ar_0)^2}u(x_1) \\[4mm]
   &\le& (\frac{\varphi''}{\varphi}\frac{1}{(Ar_0)^2}-\frac{\varphi '^2}{\varphi^2}\frac{2}{(Ar_0)^2})u(x_1)-\frac{2}{n}\varphi R^2 \\[4mm]
   &&\hskip 0.1cm +\frac{\varphi'}{\varphi}\frac{1}{Ar_0}u(x_1)(\triangle d-<\nabla f,\nabla d>). \\[4mm]
   &\le& (\frac{\varphi''}{\varphi}\frac{1}{(Ar_0)^2}-\frac{\varphi '^2}{\varphi^2}\frac{2}{(Ar_0)^2})u(x_1)-\frac{2}{n}\frac{1}{\varphi}u(x_1)^2 \\[4mm]
   &&\hskip 0.1cm +\frac{8}{3}(n-1)\frac{\varphi'}{\varphi}\frac{1}{Ar_0^2}u(x_1) \\[4mm]
   &=&\frac{u(x_1)}{\varphi}\Big\{(\varphi''\frac{1}{(Ar_0)^2}-\frac{\varphi '^2}{\varphi}\frac{2}{(Ar_0)^2})+\frac{8}{3}(n-1)\varphi'\frac{1}{Ar_0^2}-\frac{2}{n}u(x_1)\Big\}\\[4mm]
   &\le&\frac{|u(x_1)|}{\varphi}\Big\{\frac{\varphi '^2}{\varphi}\frac{2}{Ar_0^2}+\frac{8(n-1)}{3}(-\varphi')\frac{1}{Ar_0^2}+|\varphi''|\frac{1}{Ar_0^2}-\frac{2}{n}|u(x_1)|\Big\}.\\[4mm]
\end{array}$$

Note that there exist a constant
$\widetilde{C}=\widetilde{C}(\varphi)$, such that $|\varphi'|\le
\widetilde{C}$, $\frac{\varphi'^2}{\varphi}\le \widetilde{C}$, and
$|\varphi''|\le \widetilde{C}$. So
$$|u(x_1)|\le \frac{C}{Ar_0^2},$$
where the constant $C=C(\varphi,n)$, i.e., $R\ge-\frac{C}{Ar_0^2}$
on $B_{\frac{1}{2}Ar_0}(p)$.

We now consider the remaining case that $x_1\in B_{r_0}(p)$. Then
$\varphi'(x_1)=\varphi''(x_1)=0$, and we have
$$\triangle u(x_1)=2\lambda u(x_1)-\varphi|Ric|^2\le |u(x_1)|[-2\lambda-\frac{2}{n}|u(x_1)|] .$$

Since $\lambda\ge0$, we have $|u(x_1)|\le 0$, i.e., $u(x_1)=0$. This
is a contradiction.

Combining the above two cases, we have $R\ge-\frac{C}{Ar_0^2}$ on
$B_{\frac{1}{2}Ar_0}(p)$ for any $A>2$, which implies that $R\ge0$
on $M$.

The proof of (ii) is similar.

\vskip 0.1cm \noindent{\bf Step 2} We next want to show that the
gradient field grows at most linearly.

\vskip 0.1cm \noindent{\bf Claim} \emph{For any gradient Ricci
soliton, there exist constants $a$ and $b$ depending only on the
soliton, such that}

(i) $|\nabla f|(x)\le |\lambda|d(x)+a$;

(ii) $|f|(x)\le \frac{|\lambda|}{2}d(x)^2+ad(x)+b$.

For any point $x$ on $M$, we connect $p$ and $x$ by a shortest
normal geodesic $\gamma(t),t\in [0,d(x)]$.

We first consider that the soliton is steady, then $R\ge0$ and
$R+|\nabla f|^2=C\ge0$, so we have $|\nabla f|\le \sqrt{C}$.

Secondly, We consider that the soliton is shrinking. Without loss of
generality, we may assume the soliton is normalized. So $R\ge0$ and
$R+|\nabla f|^2-f=0$, these imply $f\ge |\nabla f|^2$. Let
$h(t)=f(\gamma(t))$, then
$$|h'|(t)= |<\nabla f, \dot{\gamma}>|(t)\le |\nabla
f|(\gamma(t))\le \sqrt{f(\gamma(t))}=\sqrt{h(t)} .$$

By integrating above inequality, we get
$|\sqrt{h(d(x))}-\sqrt{h(0)}|\le \frac{1}{2}d(x)$. Thus $|\nabla
f|(x)\le \frac{1}{2}d(x)+\sqrt{f(p)}$.

Finally, we consider that the soliton is expanding. Similarly we
only need to show the normalized case. So $R\ge-\frac{C}{2}$ and
$R+|\nabla f|^2+f=0$, we obtain $-f+\frac{C}{2}\ge |\nabla f|^2$.
Let $h(t)=-f(\gamma(t))+\frac{C}{2}$, thus $$|h'|(t)= |<\nabla f,
\dot{\gamma}>|(t)\le |\nabla f|(\gamma(t))\le \sqrt{h(t)} .$$

By integrating above inequality, we get
$|\sqrt{h(d(x))}-\sqrt{h(0)}|\le \frac{1}{2}d(x)$. Thus $|\nabla
f|(x)\le \frac{1}{2}d(x)+\sqrt{-f(p)+\frac{C}{2}}$.

Therefore we have proved (i).

The conclusion (ii) follows from (i) immediately.

\vskip 0.1cm \noindent{\bf Step 3} Since the gradient field $\nabla
f$ grows at most linearly, it must be integrable. Thus we have
proved theorem 1.3 .
\end{proof}

\bibliographystyle{amsplain}

\end{document}